\newcommand{\FF}{{\mathbb{F}}}
\newcommand{\ZZ}{\mathbb{Z}}
\newcommand{\bL}{{\mathbf{L}}}
\newcommand{\bG}{{\mathbf{G}}}
\newcommand{\bH}{{\mathbf{H}}}
\newcommand{\cF}{{\mathcal{F}}}
\newcommand{\cO}{{\mathcal{O}}}
\newcommand{\fS}{{\mathfrak{S}}}
\newcommand{\scrL}{{\mathscr{L}}}
\newcommand{\AV}{{\operatorname{AV}}}
\newcommand{\Hom}{{\operatorname{Hom}}}
\newcommand{\Irr}{{\operatorname{Irr}}}
\newcommand{\Ind}{{\operatorname{Ind}}}
\newcommand{\Res}{{\operatorname{Res}}}
\newcommand{\GL}{{\operatorname{GL}}}
\newcommand{\PGL}{{\operatorname{PGL}}}
\newcommand{\SL}{{\operatorname{SL}}}
\newcommand{\GU}{{\operatorname{GU}}}
\newcommand{\ad}{{\operatorname{ad}}}
\newcommand{\uni}{{\operatorname{uni}}}
\newcommand\RLG{{R_L^G}}
\newcommand\RLiG{{R_{L_i}^G}}
\newcommand\RtLtG{{R_{\widetilde{L}}^{\widetilde{G}}}}
\newcommand{\tw}[1]{{}^{#1}\!}
\let\wt=\widetilde
\newtheorem{thm}{Theorem}[section]
\newtheorem{prop}[thm]{Proposition}
\newtheorem{cor}[thm]{Corollary}
\newtheorem*{thmA}{Theorem 1}
\theoremstyle{remark}
\newtheorem{rem}[thm]{Remark}
\begin{document}

\title[Modular irreducibility of cuspidal unipotent characters]{Modular irreducibility\\
of cuspidal unipotent characters}

\date{\today}

\author{Olivier Dudas}
\address{Universit\'e Paris Diderot, UFR de Math\'ematiques,
B\^atiment Sophie Germain, 5 rue Thomas Mann, 75205 Paris CEDEX 13, France.}
\email{olivier.dudas@imj-prg.fr}

\author{Gunter Malle}
\address{FB Mathematik, TU Kaiserslautern, Postfach 3049,
         67653 Kaisers\-lautern, Germany.}
\email{malle@mathematik.uni-kl.de}

\thanks{The first author gratefully acknowledges financial support by
the ANR grant ANR-16-CE40-0010-01.
The second author gratefully acknowledges financial support by ERC
  Advanced Grant 291512.}

\keywords{}

\subjclass[2010]{Primary 20C33; Secondary  20C08}

\begin{abstract}
We prove a long-standing conjecture of Geck which predicts that cuspidal
unipotent characters remain irreducible after $\ell$-reduction. To this end,
we construct a progenerator for the category of representations of a finite
reductive group coming from generalised Gelfand--Graev representations. This
is achieved by showing that cuspidal representations appear in the head of
generalised Gelfand--Graev representations attached to cuspidal unipotent
classes, as defined and studied in \cite{GM96}.
\end{abstract}

\maketitle


\section{Introduction} \label{sec:intro}
Let $\bG$ be a connected reductive linear algebraic group defined over a
finite field of characteristic~$p>0$ with corresponding Frobenius
endomorphism $F$. This paper is devoted to
the proof of a long-standing conjecture of Geck regarding cuspidal unipotent
characters of the finite reductive group $\bG^F$ (see \cite[(6.6)]{Ge92}):

\begin{thmA}   \label{thm:A}
 Assume that $p$ and $\ell$ are good for $\bG$ and that $\ell \nmid
 p|Z(\bG)^F/Z^\circ(\bG)^F|$. Then any cuspidal unipotent character of $\bG^F$
 remains irreducible under reduction modulo~$\ell$.
\end{thmA}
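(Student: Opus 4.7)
The plan is to realise each cuspidal unipotent character $\chi_0$ of $\bG^F$ as a multiplicity-one constituent of a carefully chosen projective character, and to identify the modular constituents of that projective by understanding its head. Under the hypotheses of Theorem~1, Kawanaka's generalised Gelfand--Graev representations (GGGRs) $\Gamma_u$ attached to rational unipotent classes $u$ are projective $k\bG^F$-modules, and are therefore the natural candidates. The bridge to cuspidal unipotent characters is the notion of a \emph{cuspidal unipotent class} from \cite{GM96}: to each cuspidal unipotent character $\chi_0$ is attached a specific unipotent class $u_0$ such that $\langle \chi_0, \Gamma_{u_0} \rangle = 1$ (a multiplicity-one statement tied to Kawanaka's wavefront set).

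With this projective candidate in hand, the proof rests on two pillars. First, by Hiss's theorem on Harish--Chandra theory modulo~$\ell$, every irreducible modular constituent of the $\ell$-reduction of an ordinary cuspidal character is again cuspidal; so we only need to control the cuspidal composition factors of the reduction. Second, we prove the \emph{progenerator} property announced in the abstract: the direct sum of the $\Gamma_u$ over cuspidal unipotent classes $u$ forms a progenerator for the subcategory of cuspidal modular representations, so in particular every cuspidal simple $k\bG^F$-module lies in the head of some such $\Gamma_u$. Combined with the multiplicity-one relation $\langle \chi_0, \Gamma_{u_0} \rangle = 1$, a counting argument on decomposition numbers forces $\chi_0$ to reduce to a single cuspidal simple, which yields the theorem.

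The main obstacle is the progenerator statement, which amounts to exhibiting a cuspidal composition factor in the head of each $\Gamma_{u_0}$ for $u_0$ a cuspidal unipotent class. For classical groups one can hope to proceed by induction on the rank and by the Lusztig-symbol combinatorics of cuspidal classes; for exceptional groups it requires case-by-case verification against Lusztig's and Kawanaka's descriptions of the unipotent constituents of GGGRs. Secondary difficulties include confirming that non-unipotent contributions to $\Gamma_{u_0}$ do not disrupt the decomposition-number count (handled block-theoretically, the hypothesis $\ell \nmid p|Z(\bG)^F/Z^\circ(\bG)^F|$ ensuring that GGGRs split compatibly with the unipotent/non-unipotent block partition), and tracking the Clifford theory induced by the passage from a regular embedding $\bG \hookrightarrow \wt{\bG}$ down to $\bG^F$ when $Z(\bG)$ is disconnected.
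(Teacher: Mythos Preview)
Your overall architecture is the same as the paper's: show that a cuspidal unipotent character $\rho$ occurs with total multiplicity one in the character of a projective module whose head contains every cuspidal simple, and conclude. Two points, however, deserve attention.

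First, your counting argument is incomplete as stated. You assert $\langle \rho,\Gamma_{u_0}\rangle=1$ for the cuspidal class $u_0=C_\rho$, but to deduce irreducibility you need the total multiplicity of $\rho$ in $\bigoplus_{u}\Gamma_u$ (sum over \emph{all} cuspidal classes) to equal~$1$, i.e.\ you also need $\langle\rho,\Gamma_u\rangle=0$ for every cuspidal class $u\neq u_0$. The paper supplies this via \cite[Thm.~3.3]{GM96}: for any cuspidal class $C\neq C_\rho$ one has $C_\rho\subset\overline C$, hence $\dim C>\dim C_{\rho^*}$ and the wave-front vanishing kills $\langle\gamma_u,\rho\rangle$. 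Without this step your ``counting argument on decomposition numbers'' does not close. (Your worry about non-unipotent constituents of $\Gamma_{u_0}$, by contrast, is unnecessary: the equality $\langle P,\rho\rangle=1$ is a statement about ordinary characters and needs no block separation.)

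Second, and more substantially, your plan for the progenerator step (``induction on the rank and symbol combinatorics for classical groups; case-by-case for exceptional groups'') is not how the paper proceeds, and it is not clear it would succeed. The paper's uniform mechanism is Proposition~\ref{prop:unipfunctions}: the Harish--Chandra induced characters $\{R_L^G(\gamma_u^L)\}$, with $u$ running over cuspidal classes of $1$-split Levis $L$, span the $\ZZ$-module of unipotently supported class functions (this is the Geck--H\'ezard proof of Kawanaka's conjecture, upgraded to good characteristic via Taylor). Pairing a cuspidal Brauer character against this basis and using cuspidality to kill all terms with $L\neq G$ immediately yields that some $\gamma_u$ with $u$ in a cuspidal class of $G$ pairs nontrivially with it; this is Theorem~\ref{thm:cusp}. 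That is the key idea you are missing. Once it is in place, the paper packages everything into a single progenerator $P=\bigoplus_{(L,C)}R_L^G(k\Gamma_C^L)$ for all of $kG$-mod and reads off $\langle P,\rho\rangle=1$; your variant using only cuspidal classes in $G$ together with Hiss's theorem is logically equivalent but needs the extra input.
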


This property was shown to be of the utmost importance in order to relate
Harish-Chandra series in characteristic zero and $\ell$, see
\cite[Prop.~6.5]{Ge92}, and for the study of supercuspidal representations,
see Hiss \cite{Hi96}. The conjecture had previously only been shown in special
cases: for $\GU_n(q)$ by Geck \cite{Ge91}, for classical groups at linear
primes $\ell$ by Gruber--Hiss \cite{GrHi} and at the prime~$\ell=2$ by Geck
and the second author \cite{GM96}, as well as for some exceptional groups for
which the $\ell$-modular decomposition matrix is known. 

\smallskip

Our strategy for the proof of this conjecture is the construction of a
progenerator of the category of representations of $\bG^F$ over a field of
positive characteristic $\ell\ne p$ (non-defining characteristic). We produce
such a progenerator $P$ using generalised Gelfand--Graev representations
associated to suitably chosen unipotent classes.
We then show that cuspidal unipotent characters occur with multiplicity
one in $P$, which is enough to deduce the truth of Geck's conjecture.
\smallskip

Generalised Gelfand--Graev representations (GGGRs) are a family of projective
representations $\{\Gamma_C^G\}$ labelled by unipotent classes $C$ of $G=\bG^F$.
They are defined whenever the characteristic~$p$ is good for $\bG$.
The sum of all the GGGRs is a progenerator for the representations of $G$,
since the GGGR corresponding to the trivial class is the regular representation,
hence a progenerator itself. Our construction relies on the work of Geck and
the second author \cite{GM96} who showed that any representation $\Gamma_C^G$
can be replaced by the Harish-Chandra induction of a GGGR from a suitable Levi
subgroup $L$ of $G$, up to adding and removing GGGRs corresponding to unipotent
classes larger than $C$ for the closure ordering. This led them to the notion
of \emph{cuspidal classes} for which no such proper Levi subgroup exists.
Following their observation, we consider the projective module
$$ P = \bigoplus_{(L,C)} R_L^G(\Gamma_C^L)$$
where $L$ runs over the set of $1$-split Levi subgroups of $G$ (\emph{i.e.},
Levi complements of rational parabolic subgroups) and $C$ over the set of
cuspidal unipotent classes of $L$.

We show that $P$ is a progenerator (see Corollary \ref{cor:progenerator}).
To this end we adapt the work of Geck--H\'ezard \cite{GeHe08} on a
conjecture of Kawanaka to prove that the family of characters of Harish-Chandra
induced GGGRs forms a basis of the space of unipotently supported class
functions. As a consequence, we obtain that cuspidal modules must appear in
the head of GGGRs associated to cuspidal classes. We believe that this should
be of considerable interest for studying projective covers of cuspidal modules,
as there exist very few cuspidal classes in general. For example, when $G$ is
a group of type $A$, cuspidal classes are regular classes and only usual
Gelfand--Graev representations are needed to define $P$. In this specific
case, such a progenerator already appears for example in work of Bonnaf\'e
and Rouquier \cite{BR06,Bo11}.
\smallskip

This paper is organised as follows. Section~\ref{sec:progen} is devoted to the
construction of the progenerator. In Corollary~\ref{cor:progenerator} we show
how to construct it from generalised Gelfand--Graev representations.
Section~\ref{sec:irrcusp} contains our main application on the $\ell$-reduction
of cuspidal unipotent characters, with the proof of Theorem~1.
\medskip

\noindent
{\bf Acknowledgement:} We thank Meinolf Geck and Jay Taylor for valuable
comments on an earlier version.

\section{A progenerator}   \label{sec:progen}

Let $\bG$ be a connected reductive linear algebraic group over
$\overline{\FF_p}$ and $F$ be a Frobenius endomorphism endowing $\bG$ with
an $\FF_q$-structure. If $\bH$ is any $F$-stable closed subgroup of $\bG$,
we denote by $H := \bH^F$ the finite group of $\FF_q$-points in $\bH$.

We let $\ell\ne p$ be a prime and let $(K,\cO,k)$ denote a splitting
$\ell$-modular system for $G$. We will consider representations of $G$
over one of the rings $K$, $\cO$, or $k$.

Throughout this section, we will always assume that $p$ is good for $\bG$.
The results on generalised Gelfand--Graev representations that we shall need
were originally proved by Kawanaka \cite{Kaw82} and Lusztig \cite{Lu92},
under some restriction on $p$ and $q$.
This restriction was recently removed by Taylor \cite{Tay14}, so that we can
work under the assumption that $p$ is good for $\bG$. Note that this is already
required for the classification of unipotent classes to be independent from $p$.

\subsection{Unipotent support of unipotent characters}   \label{sec:duality}
Given $\rho\in \Irr(G)$ and $C$ an $F$-stable unipotent class of $\bG$, we
denote by $\AV(C,\rho) = |C^F|^{-1}\sum_{g \in C^F}\rho(g)$ the average value
of $\rho$ on $C^F$. We say that $C$ is a \emph{unipotent support} of $\rho$
if $C$ has maximal dimension for the property that $\AV(C,\rho)\neq 0$. 
Geck \cite[Thm.~1.4]{Ge96} has shown that whenever $p$ is good for $\bG$,
any irreducible character $\rho$ of $G$ has a unique unipotent support, which
we will denote by $C_\rho$.

By \cite[\S11]{Lu92} (see \cite[\S14]{Tay14} for the extension to any
good characteristic), unipotent supports of unipotent characters are special
classes. They can be computed as follows: any family $\cF$ of the Weyl group
of $\bG$ contains a unique special representation, which is the image under
the Springer correspondence of the trivial local system on a special
unipotent class $C_\cF$. Then this class is the common unipotent support
of all the unipotent characters in $\cF$.

\subsection{Generalised Gelfand--Graev representations}
Given an $F$-stable unipotent element $u \in G$, we denote by $\Gamma_u^G$, or
simply $\Gamma_u$, the \emph{generalised Gelfand--Graev representation}
associated with $u$. It is an $\cO G$-lattice. The construction is given for
example in \cite[\S3.1.2]{Kaw82} (with some extra assumption on $p$) or in
\cite[\S5]{Tay14}. The first elementary properties that can be deduced are
\begin{itemize}
 \item if $\ell \neq p$, then $\Gamma_u$ is a projective $\cO G$-module;
 \item if $u$ and $u'$ are conjugate under $G$ then $\Gamma_u\cong\Gamma_{u'}$.
\end{itemize}
The character of $K\Gamma_u$ is the \emph{generalised Gelfand--Graev character}
associated with $u$. We denote it by $\gamma_u^G$, or simply $\gamma_u$. It
depends only on the
$G$-conjugacy class of $u$. When $u$ is a regular unipotent element then
$\gamma_u$ is a usual Gelfand--Graev character as in \cite[\S14]{DM91}.
\smallskip

Lusztig \cite[Thm.~11.2]{Lu92} (see Taylor \cite{Tay14} for
the extension to good characteristic) gave a condition on the unipotent
support of a character to occur in a generalised Gelfand-Graev character.
Namely, given $\rho \in \Irr(G)$ and $\rho^* \in \Irr(G)$ its Alvis-Curtis dual
(see \cite[\S8]{DM91})
 \begin{itemize}
  \item there exists $u\in C_{\rho^*}^F$ such that
   $\langle\gamma_u;\rho\rangle \neq 0$;
  \item if $C$ is an $F$-stable unipotent conjugacy class of $\bG$ such that
   $\dim C > \dim C_{\rho^*}$ then $\langle \gamma_{u};\rho\rangle = 0$ for
   all $u \in {C}^F$.
\end{itemize}

\subsection{Cuspidal unipotent classes} Following Geck and Malle \cite{GM96} we
say that an $F$-stable unipotent class $C$ of $\bG$ is \emph{non-cuspidal} if
there exists a 1-split \emph{proper} Levi subgroup $\bL$ of $\bG$ such that
\begin{itemize}
 \item $C \cap L \neq \emptyset$;
 \item for all $u \in C\cap L$, the natural map
  $C_\bL(u)/C_\bL^\circ(u) \longrightarrow C_\bG(u)/C_\bG^\circ(u)$ is an
  isomorphism.
\end{itemize}
Here recall that a \emph{1-split Levi subgroup} of $(\bG,F)$ is by definition
an $F$-stable Levi complement of an $F$-stable parabolic subgroup of $\bG$.
If no such proper Levi subgroup exists, we say that the class $C$ is
\emph{cuspidal}. Note that cuspidality is preserved under the quotient map
$\bG \rightarrow \bG/Z^\circ(\bG)$. In particular, when $\bG$ has connected
centre, a unipotent class $C$ is cuspidal if and only if its image in the
adjoint quotient $\bG_\ad$ (with same root system as $\bG$) is cuspidal.

\subsection{A progenerator}   \label{subsec:progen}
Recall that $\bG$ is connected reductive in
characteristic~$p$ and that $\ell \neq p$. In particular every generalised
Gelfand--Graev representation of $\cO G$ is projective.
\smallskip

When $\bG=\GL_n$, it is known from \cite[Thm.~7.8]{GHM} that any cuspidal
$kG$-module $N$ lifts to characteristic zero in a (necessarily) cuspidal
$KG$-module. The latter is a constituent of some Gelfand--Graev
representation $\Gamma$ of $G$ so that $N$ is in the head of $k\Gamma$.
We prove an analogue of this result for $\bG$ of arbitrary type.

\begin{thm}   \label{thm:cusp}
 Assume that $p$ is good for $\bG$ and that $\ell \neq p$. Let $N$ be a cuspidal
 $kG$-module. Then there exists an $F$-stable unipotent class $C$ of $\bG$ which
 is cuspidal for $\bG_\ad$ and $u \in C^F$ such that
 $\Hom_{kG}(k\Gamma_u, N) \neq 0$.
\end{thm}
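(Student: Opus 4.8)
The plan is to deduce this from the two properties of generalised Gelfand--Graev characters recalled above, together with the ordering-reduction result of Geck--Malle \cite{GM96}. First I would pick an irreducible constituent $\rho\in\Irr(G)$ of the $K$-span of a projective cover of $N$; equivalently, choose $\rho$ so that the projective indecomposable $\cO G$-lattice with head $N$ (or rather its character) has $\rho$ as a constituent. Since $N$ is cuspidal, any such $\rho$ is cuspidal as well: cuspidality is detected by the vanishing of $R_L^G$-restriction (Harish-Chandra restriction), and if $N$ is cuspidal then every ordinary constituent of a projective cover of $N$ has trivial Harish-Chandra restriction to every proper $1$-split Levi. Next, apply the Lusztig--Taylor result \cite[Thm.~11.2]{Lu92}, \cite{Tay14} to $\rho$: there is $u\in C_{\rho^*}^F$ with $\langle\gamma_u;\rho\rangle\ne0$, and $\langle\gamma_v;\rho\rangle=0$ for all $v$ in classes of dimension $>\dim C_{\rho^*}$. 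Since $\rho$ occurs in $K\Gamma_u$ and $K\Gamma_u$ is projective, $\rho$ and hence $N$ appears in $k\Gamma_u$, i.e.\ $\Hom_{kG}(k\Gamma_u,N)\ne0$, for this particular $u$ lying in the unipotent support $C_{\rho^*}$ of the Alvis--Curtis dual of $\rho$.

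The remaining, and main, point is to show that this class $C_{\rho^*}$ can be taken to be cuspidal for $\bG_\ad$. Here I would invoke the result of Geck--Malle \cite{GM96}: for a \emph{non-cuspidal} $F$-stable unipotent class $C$ there is a proper $1$-split Levi $\bL$ with $C\cap L\ne\emptyset$ and $C_\bL(u)/C_\bL^\circ(u)\xrightarrow{\sim}C_\bG(u)/C_\bG^\circ(u)$, and in that situation $\Gamma_u^G$ is expressible (in the Grothendieck group, or up to projective summands) through $R_\bL^G(\Gamma_{u'}^L)$ for $u'\in C\cap L$ plus GGGRs attached to classes strictly larger than $C$ in the closure order. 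Since $N$ is cuspidal, $\Hom_{kG}(R_L^G(k\Gamma_{u'}^L),N)=0$ by adjunction (Harish-Chandra restriction of $N$ vanishes), so the only way $\Hom_{kG}(k\Gamma_u^G,N)\ne0$ can survive is through a GGGR attached to a strictly larger class. Thus, starting from $C_{\rho^*}$ and iterating this replacement, the closure order is a well-founded partial order on the finite set of unipotent classes, so after finitely many steps we reach an $F$-stable class $C$ that is cuspidal for $\bG_\ad$ and some $u\in C^F$ with $\Hom_{kG}(k\Gamma_u^G,N)\ne0$.

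I expect the technical crux to lie in making the replacement step precise: one must check that the $\cite{GM96}$ identity, which relates \emph{characters} of GGGRs over $K$, lifts to a statement about the lattices $\Gamma_u$ over $\cO$ that is strong enough to control $\Hom_{kG}(-,N)$ (for instance, an isomorphism of $\cO G$-modules up to projective summands that are Harish-Chandra induced or are GGGRs of larger classes), and that the cuspidality hypothesis on $N$ indeed forces $\Hom_{kG}(R_L^G(P'),N)=0$ for any projective $kL$-module $P'$ --- this is just the exactness and adjunction $(\,{}^*R_L^G, R_L^G)$ of Harish-Chandra induction/restriction together with $\,{}^*R_L^G N=0$. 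Care is also needed because a priori the class $C_{\rho^*}$ depends on the choice of $\rho$; but any valid choice feeds into the same descent argument, so this is harmless. The conclusion that cuspidality is inherited by the image in $\bG_\ad$ is exactly the remark made in the excerpt, so no extra work is required there.
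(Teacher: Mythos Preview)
Your opening two steps contain genuine errors. First, it is not true that every ordinary constituent of the projective cover $P_N$ of a cuspidal simple $kG$-module $N$ is itself cuspidal: take $G=\GL_2(q)$ with $\ell\mid q+1$ and $N$ the cuspidal simple module of dimension $q-1$; then $KP_N$ is the Steinberg character, which lies in the principal series and is not cuspidal. Second, even granting a constituent $\rho$ of $KP_N$ with $\langle\gamma_u;\rho\rangle\ne0$, you cannot conclude $\Hom_{kG}(k\Gamma_u,N)\ne0$. All that follows is $P_M\mid\Gamma_u$ for \emph{some} simple $M$ with $d_{\rho,M}\ne0$; nothing forces $M=N$. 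So the chain ``$\rho$ in $K\Gamma_u$ $\Rightarrow$ $N$ in the head of $k\Gamma_u$'' breaks.

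The paper avoids both problems by never passing through an ordinary $\rho$. It first makes a regular embedding $\bG\hookrightarrow\wt\bG$ (needed for connected centre, which you do not address), replaces $N$ by a simple constituent $M$ of $\Ind_G^{\wt G}N$ with Brauer character $\psi$, and uses the exact identity
$\dim\Hom_{k\wt G}(k\wt\Gamma_u,M)=\langle\wt\gamma_u;\psi\rangle_{\wt G}$,
valid because $\wt\Gamma_u$ is induced from a linear character of a $p$-group, hence an $\ell'$-group. The key input is then a spanning result (Proposition~\ref{prop:unipfunctions}, after Geck--H\'ezard): the characters $R_{\wt L}^{\wt G}(\wt\gamma_u^{\wt L})$ with $u$ in a cuspidal class of $\wt L$ generate all unipotently supported class functions. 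Cuspidality of $\psi$ kills the proper-Levi terms, and $\psi_{\uni}\ne0$ forces some $\langle\wt\gamma_u;\psi\rangle\ne0$ with $u$ in a cuspidal class of $\wt\bG$; restriction back to $G$ finishes. Your iterative scheme (4)--(7) is morally this same reduction unwound step by step, but making the replacement identity precise enough to iterate already requires the unitriangularity underlying Proposition~\ref{prop:unipfunctions}, so nothing is saved; and you would still need a correct starting point (the trivial class $u=1$ works trivially, rendering steps (1)--(3) superfluous anyway).
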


This results from the following version of a conjecture by Kawanaka
that was proved by Geck and H\'ezard \cite[Thm.~4.5]{GeHe08}.

\begin{prop}   \label{prop:unipfunctions}
 Assume that the centre of $\bG$ is connected. Then the $\ZZ$-module of
 unipotently supported virtual characters of $G$ is generated by
 $\{\RLG(\gamma_u^L)\}$ where $\bL$ runs over 1-split Levi subgroups of $\bG$,
 and $u$ over $F$-stable unipotent elements of cuspidal unipotent classes
 of~$\bL$.
\end{prop}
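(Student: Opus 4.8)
The plan is to deduce the statement from the unrefined form of Kawanaka's conjecture --- that \emph{all} generalised Gelfand--Graev characters together generate the module of unipotently supported virtual characters of $G$ --- combined with the Levi reduction of \cite{GM96}; this is where the work of Geck--H\'ezard is adapted, and under the stated hypotheses the result is in fact essentially \cite[Thm.~4.5]{GeHe08}. Write $\mathcal{V}(G)$ for the $\ZZ$-module of unipotently supported virtual characters of $G=\bG^F$, and let $\mathcal{V}_0(G)\subseteq\mathcal{V}(G)$ denote the submodule generated by the characters $\RLG(\gamma_v^L)$ with $\bL$ a $1$-split Levi subgroup of $\bG$ and $v\in L$ a unipotent element whose $\bL$-class is cuspidal; the inclusion $\mathcal{V}_0(G)\subseteq\mathcal{V}(G)$ is clear from the definitions, as Harish-Chandra induction carries $\ZZ\Irr(L)$ into $\ZZ\Irr(G)$. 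By \cite[Thm.~4.5]{GeHe08} --- whose restrictions on $p$ and $q$ can be dropped by appealing to \cite{Tay14} --- the module $\mathcal{V}(G)$ is generated by the GGGR characters $\gamma_u^G$ as $u$ runs over all unipotent classes of $G$. It therefore suffices to prove that $\gamma_u^G\in\mathcal{V}_0(G)$ for every unipotent $u\in G$, and this I would establish for \emph{every} connected reductive $\bG$ with $p$ good (no hypothesis on the centre being needed at this stage).

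I would argue by induction on $\dim\bG$, and for a fixed $\bG$ by downward induction on $\dim C$, where $C$ is the $\bG$-class of $u$. If $C$ is cuspidal for $\bG$ --- in particular when $\bG$ is a torus, whose only unipotent class is trivial --- then $\gamma_u^G$ is already one of the distinguished generators of $\mathcal{V}_0(G)$, with $\bL=\bG$. Otherwise $C$ is non-cuspidal, so there is a proper $1$-split Levi subgroup $\bL\subsetneq\bG$ meeting $C$ and satisfying the component-group condition. The decisive input is the triangularity theorem of \cite{GM96}: modulo GGGR characters attached to unipotent classes of $G$ of dimension strictly greater than $\dim C$, the character $\gamma_u^G$ is a $\ZZ$-linear combination of Harish-Chandra induced characters $\RLG(\gamma_v^L)$ with $v\in C\cap L$. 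Since $\dim\bL<\dim\bG$, the induction hypothesis applied to $\bL$ gives $\gamma_v^L\in\mathcal{V}_0(L)$; applying $\RLG$, using transitivity of Harish-Chandra induction and the fact that a $1$-split Levi of a $1$-split Levi of $\bG$ is again one of $\bG$, we obtain $\RLG(\gamma_v^L)\in\mathcal{V}_0(G)$. The remaining correction terms, being GGGR characters of strictly larger classes, lie in $\mathcal{V}_0(G)$ by the downward induction (and this list may be empty, for instance when $C$ is the regular class). Hence $\gamma_u^G\in\mathcal{V}_0(G)$, which completes the induction and, combined with the first paragraph, the proof.

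The substance of the argument lies not in this bookkeeping but in its two inputs. The first is the Kawanaka conjecture itself --- that all GGGR characters generate $\mathcal{V}(G)$ --- which is \cite[Thm.~4.5]{GeHe08} and rests on Lusztig's and Geck's analysis of unipotent supports and special classes; this is the step I expect to be genuinely hard, and I would simply invoke it. The second is the precise triangularity of \cite{GM96}: it is essential there that passing from $\gamma_u^G$ to a Harish-Chandra induced GGGR introduces only classes \emph{strictly larger} than $C$ in the closure order, since this is exactly what makes the downward induction well-founded. Finally I would record the elementary fact that a Levi subgroup of a reductive group with connected centre again has connected centre; together with the remarks recalled above this renders ``cuspidal class'' unambiguous for all the Levi subgroups occurring, identifies it with the notion of \cite{GM96}, and allows the conclusion obtained when $Z(\bG)$ is connected to be matched against \cite[Thm.~4.5]{GeHe08}.
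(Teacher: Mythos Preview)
Your argument is correct and rests on the same two inputs as the paper---the Geck--H\'ezard theorem and the Geck--Malle Levi reduction---but the organisation is genuinely different. The paper does not first invoke Kawanaka's conjecture and then reduce each $\gamma_u^G$ inductively; instead it works directly with the dual family $\{D_G(\rho_{i,r})\}$ produced in \cite[\S4]{GeHe08}, chooses the representatives $u_{i,s}$ inside minimal Levi subgroups $L_i$ via \cite[Thm.~3.2]{GM96}, and shows that the pairing matrix $\big(\langle D_G(\rho_{j,r});R_{L_i}^G(\gamma_{u_{i,s}}^{L_i})\rangle\big)$ is block upper-triangular with identity diagonal blocks, using \cite[Cor.~2.7]{GM96} for the diagonal and the support statement \cite[Prop.~2.3]{GM96} for the vanishing below it. Your inductive route is arguably cleaner conceptually and makes the role of cuspidal classes transparent; the paper's matrix route is more economical in what it asks of \cite{GM96}, since it only needs that the Alvis--Curtis dual of $R_L^G(\gamma_u^L)$ is \emph{supported} on classes above $C$, not that the difference $R_L^G(\gamma_u^L)-\gamma_u^G$ is literally a $\ZZ$-combination of GGGRs from larger classes, and it sidesteps having to argue that every $G$-orbit in $C^F$ meets the chosen Levi. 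Two minor points on your write-up: the passage from large $p$ to good $p$ in \cite{GeHe08} is not a one-line appeal to \cite{Tay14}---the paper routes it through \cite[Prop.~6.6,~6.7]{Ge97} and \cite[\S4]{DLM14} according to the shape of the canonical quotient attached to the family; and your downward induction should really be on the closure order rather than on $\dim C$, since distinct classes may have equal dimension.
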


\begin{proof}
Let $C_1, \ldots, C_N$ be the $F$-stable unipotent classes of $\bG$, ordered
by increasing dimension. For each $C_i$, we choose a system of representatives
$u_{i,1},u_{i,2},\ldots$ of the $G$-orbits in $C_i^F$. In \cite[\S 4]{GeHe08},
it is shown, under the assumption that $p$ is large, that to each $u_{i,r}$
one can associate an irreducible character $\rho_{i,r}$ of $G$ such that
\begin{itemize}
 \item[(1)] $\rho_{i,r}$ has unipotent support $C_i$;
 \item[(2)] $\langle D_{G}(\rho_{i,r});\gamma_{u_{i,s}}\rangle
   = \pm \delta_{r,s}$ for all $s$.
\end{itemize}
Thus the $\{D_G(\rho_{i,r})\}$ span the $\ZZ$-module of unipotently supported
virtual characters of $G$. Here $D_G$ denotes the Alvis--Curtis
duality on characters (so $D_G(\rho)=\pm\rho^*$ in our earlier notation).
Let us explain why the arguments in \cite[\S 4]{GeHe08} can be generalised
to the case of good characteristic.
For a given $i$, the irreducible characters $\{\rho_{i,1},\rho_{i,2},\ldots\}$
are obtained as characters lying in a family of a Lusztig series belonging to
an isolated element, and whose associated unipotent support is $C_i$. When the
finite group attached by Lusztig to the family is abelian (resp.~isomorphic to
$\fS_3$), these characters are constructed using \cite[Prop.~6.6]{Ge97}
(resp.~\cite[Prop.~6.7]{Ge97}). As mentioned in \cite[\S 2.4]{Ge97}, this
requires a generalisation of some of Lusztig's results on generalised
Gelfand--Graev representations \cite{Lu92} to the case of good characteristic.
This was recently achieved by Taylor \cite{Tay14}.
Finally, when the finite group attached to the family is isomorphic to $\fS_4$
or $\fS_5$, then $C$ is a specific special unipotent class of $F_4$ or $E_8$.
In that case one can use the results in \cite[\S 4]{DLM14} which hold whenever
$p$ is good or $\bG$, again thanks to \cite{Tay14}.
\smallskip

Now let us choose the system of representatives $u_{i,s}\in C_i$ in a minimal
Levi subgroup $L_i$ given by \cite[Thm.~3.2]{GM96}. Then property~(2) is still
satisfied if we replace $\gamma_{u_{i,s}}^G$ by $\RLiG(\gamma_{u_{i,s}}^{L_i})$,
see \cite[Cor.~2.7]{GM96} and \cite[Thm.~8.11]{DM91}. Moreover, for $j<i$ we
have $\langle D_{G}(\rho_{j,r}); \RLiG(\gamma_{u_{i,s}}^{L_i}) \rangle = 0$
since $\rho_{j,r}$ vanishes on the support of
$D_G\big(\RLiG(\gamma_{u_{i,s}}^{L_i})\big)$.
Indeed, by \cite[Prop.~2.3]{GM96}, this support is contained in the union of
unipotent classes $C_l$ satisfying $C_i \subset \overline{C}_l$. But if
$C_i \subset \overline{C}_j$ with $\dim C_j \leq \dim C_i$ we would have
$C_i = C_j$. Therefore the matrix
$$\Big( \left\langle D_{G}(\rho_{j,r}); \RLiG( \gamma_{u_{i,s}}^{L_i})
  \right\rangle_{G} \Big)_{j,r;i,s}$$
is block upper diagonal with identity blocks on the diagonal, hence invertible.
Therefore, $\{\RLiG( \gamma_{u_{i,s}}^{L_i})\}$ also spans the
$\ZZ$-module of unipotently supported virtual characters of~$G$.
\end{proof}

\begin{proof}[Proof of Theorem~\ref{thm:cusp}]
Let $\bG \hookrightarrow \wt{\bG}$ be a regular embedding, compatible with $F$,
that is $\wt{\bG}$ has connected centre and same derived subgroup as $\bG$.
Note that this restricts to an isomorphism on the variety of unipotent elements.
To avoid any confusion, given a unipotent element $u$ in $\wt G$ we shall
denote by $\wt \Gamma_u$ (resp.~$\Gamma_u$) the corresponding generalised
Gelfand--Graev representation of $\wt G$ (resp.~$G$) and by $\wt \gamma_u$
(resp.~$\gamma_u$) its character. By construction we have
$\wt \Gamma_u=\Ind^{\wt G}_G\,\Gamma_u$. Fix a system of coset representatives
$g_1,\ldots,g_r$ of $\wt G/G$. Since $G\unlhd\wt G$, the Mackey formula yields
an isomorphism of functors
\begin{equation}\label{eq:mackey}
  \Res^{\wt G}_G \circ \Ind^{\wt G}_G \simeq
   \bigoplus_{i=1}^r \ad(g_i).
\end{equation}
In particular $\Res_G^{\wt G}\, \wt \Gamma_u \simeq
\bigoplus \Gamma_{g_i u g_i^{-1}}$ (using that $\ad(g_i) \big(\Gamma_u\big) = 
\Gamma_{g_i u g_i^{-1}}$, see \cite[Prop.~2.2]{Ge93}).

\smallskip
Let $N$ be a cuspidal simple $kG$-module. From \eqref{eq:mackey} we deduce
that $\Ind^{\wt G}_G\, N$ is a cuspidal $k\wt G$-module. Let $M$ be a simple
constituent of the socle of $\Ind^{\wt G}_G \, N$ and let $\psi$ be its Brauer
character. We denote by $\psi_\uni$ the unipotently supported class function
which coincides with $\psi$ on the set of unipotent elements. Then for every
unipotent element $u$ contained in a Levi subgroup $\wt L$ of $\wt G$ we have
$$\langle \RtLtG(\wt\gamma_u^{\tilde L});\psi_\uni \rangle_{\wt G}
  = \langle \RtLtG(\wt\gamma_u^{\tilde L});\psi \rangle_{\wt G},$$
which is zero whenever $\wt L \neq \wt G$ since $\psi$ is
cuspidal. Together with Proposition \ref{prop:unipfunctions}, we deduce that
there exists an $F$-stable cuspidal unipotent class $C$ of $\wt \bG$ and
$u \in C^F$ such that $\langle \wt\gamma_u;\psi \rangle_{\wt G} \neq 0$. By
construction of the generalised Gelfand--Graev characters there exist a
unipotent subgroup $U_{1.5}$ of $\wt G$ and a linear character $\chi_u$ of
$U_{1.5}$ such that $\wt\gamma_u = \Ind_{U_{1.5}}^{\wt G}\, \chi_u$.
If $k_{\chi_u}$ denotes the $1$-dimensional $kU_{1.5}$-module on which
$U_{1.5}$ acts by $\chi_u$ then since $U_{1.5}$ is a $p$-group, hence an
$\ell'$-group, we get
$$\begin{aligned}
  \dim \Hom_{k\wt G}(k\wt\Gamma_u, M) & \,= \dim \Hom_{k{U_{1.5}}}
  (k_{\chi_u}, \Res_{{U_{1,5}}}^{\wt G} M) \\
  & \,=  \langle \chi_u; \Res_{{U_{1.5}}}^{\wt G} \psi\rangle_{U_{1.5}} \\
  & \, =  \langle \wt\gamma_u ; \psi\rangle_{\wt G}
\end{aligned}$$
which is non-zero. This proves that there is a surjective map
$k\wt\Gamma_u \twoheadrightarrow M$. By restriction to $G$ we get surjective
maps $\Res_G^{\wt G}\, k\wt\Gamma_u \twoheadrightarrow \Res_G^{\wt G}\, M
\twoheadrightarrow N$. It follows that
$\Hom_{kG}(k\Gamma_{g_i u g_i^{-1}}, N) \neq 0$ for at least one
$i\in\{1,\ldots,r\}$, thus proving the claim.
\end{proof}

Given $C$ an $F$-stable cuspidal unipotent class of $\bG$ we will denote by
$\Gamma_C$ the sum of all the generalised Gelfand--Graev representations
corresponding to representatives of $G$-orbits in $C^F$. If $N$ is a simple
$kG$-module then there exist a 1-split Levi subgroup $\bL$ of $\bG$ and a
cuspidal $kL$-module $M$ such that $\Hom_{kG}(\RLG(M),N)\ne0$. Consequently,
we can build a progenerator of $kG$ using Theorem~\ref{thm:cusp} for all
1-split Levi subgroups. For this, define $\scrL$ to be the set of pairs
$(L,C)$ such that $\bL$ is a 1-split Levi subgroup of $\bG$, and $C$ is an
$F$-stable unipotent class of $\bL$ which is cuspidal for $\bL_\ad$

\begin{cor}   \label{cor:progenerator}
 Let $\bG$ be connected reductive with Frobenius map $F$ and assume that $p$ is
 good for $\bG$. Then the module
 $$ P = \bigoplus_{(L,C) \in \scrL} \RLG(k\Gamma_C^L)$$
 is a progenerator of $kG$.
\end{cor}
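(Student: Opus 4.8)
The plan is to check the two properties that characterise a progenerator of $kG$: that $P$ is finitely generated projective, and that $\Hom_{kG}(P,N)\neq0$ for every simple $kG$-module $N$ (equivalently, every projective indecomposable $kG$-module occurs as a direct summand of $P$).

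I would first deal with projectivity, which is purely formal given what has been set up. Fix $(L,C)\in\scrL$. Goodness of $p$ for $\bG$ is inherited by the Levi subgroup $\bL$, so the GGGRs $\Gamma_u^L$ are defined for $u\in C^F$, and since $\ell\neq p$ they are projective $\cO L$-lattices; hence $k\Gamma_C^L$, a finite direct sum of $\ell$-reductions of such lattices, is a projective $kL$-module. The functor $\RLG$ is ordinary induction from the finite group of $\FF_q$-points of a rational parabolic subgroup with Levi complement $\bL$, precomposed with inflation along its unipotent radical; the latter is a $p$-group, hence an $\ell'$-group, so inflation sends projectives to projectives, and ordinary induction does too. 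Therefore each $\RLG(k\Gamma_C^L)$ is a projective $kG$-module, and so is $P$. Since $\RLG(M)$ depends on $\bL$ only up to $G$-conjugacy and there are finitely many $G$-classes of pairs in $\scrL$, only finitely many isomorphism types of summand occur, so $P$ is finitely generated.

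Next comes the generation property, which is where the real content lies. Let $N$ be a simple $kG$-module. By standard Harish-Chandra theory over $k$ (existence of a cuspidal support for simple modules), essentially the statement recalled just before the corollary but with the cuspidal module taken simple, there are a $1$-split Levi subgroup $\bL$ of $\bG$ and a simple cuspidal $kL$-module $M$ with $\Hom_{kG}(\RLG(M),N)\neq0$; since $N$ is simple, any such nonzero map is surjective, so $\RLG(M)\twoheadrightarrow N$. Now $p$ is good for $\bL$, so Theorem~\ref{thm:cusp}, applied to the connected reductive group $\bL$ with its cuspidal module $M$, yields an $F$-stable unipotent class $C$ of $\bL$ which is cuspidal for $\bL_\ad$ and an element $u\in C^F$ with $\Hom_{kL}(k\Gamma_u^L,M)\neq0$. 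By definition $(L,C)\in\scrL$. Here it is essential that $M$ is simple: then the nonzero map $k\Gamma_u^L\to M$ is surjective, and since $\Gamma_u^L$ is a direct summand of $\Gamma_C^L$ we obtain a surjection $k\Gamma_C^L\twoheadrightarrow M$. Applying the exact functor $\RLG$ gives $\RLG(k\Gamma_C^L)\twoheadrightarrow\RLG(M)$, and composing with $\RLG(M)\twoheadrightarrow N$ produces a surjection $\RLG(k\Gamma_C^L)\twoheadrightarrow N$. Hence $\Hom_{kG}(P,N)\neq0$, so $N$ lies in the head of $P$.

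Together these two points show that $P$ is a finitely generated projective $kG$-module whose simple quotients exhaust $\Irr(kG)$, i.e.\ a progenerator. The only substantial ingredient is Theorem~\ref{thm:cusp}, now invoked for every $1$-split Levi subgroup of $\bG$ rather than for $\bG$ alone, combined with the existence of a (simple) cuspidal Harish-Chandra source for each simple $kG$-module; the projectivity and exactness properties of $\RLG$ and the behaviour of GGGRs under $\ell$-reduction are standard, so I foresee no genuine obstacle beyond having Theorem~\ref{thm:cusp} available.
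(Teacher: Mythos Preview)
Your argument is correct and is precisely the one the paper has in mind: the paragraph immediately preceding the corollary already records that every simple $kG$-module $N$ admits a cuspidal pair $(L,M)$ with $\Hom_{kG}(\RLG(M),N)\ne0$, and then Theorem~\ref{thm:cusp} applied to $\bL$ places $M$ in the head of some $k\Gamma_u^L$ with $(L,C)\in\scrL$, whence $N$ lies in the head of $\RLG(k\Gamma_C^L)$ by exactness of $\RLG$. You have simply spelled out the details the paper leaves implicit (projectivity of $\RLG(k\Gamma_C^L)$ via the $\ell'$-unipotent radical, and taking $M$ simple so that nonzero maps are surjections); the only quibble is that ``finitely many isomorphism types of summand'' does not by itself force $P$ to be finitely generated, but this is a harmless ambiguity already present in the paper's formulation of $\scrL$ and is irrelevant to the generation property and its applications.
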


\begin{rem}
Except for groups of type $A$, even the multiplicities of unipotent characters
in the character of $P$ depend on $q$ (e.g. the multiplicity of the Steinberg
character).
\end{rem}

\begin{rem}
One can use Theorem \ref{thm:cusp} to reprove that there is at most one
unipotent cuspidal module in $\GL_n(q)$ (see for example
\cite[Thm.~5.21 and Cor.~5.23]{Di98}). Indeed, the only
cuspidal class in $\PGL_n(\overline{\FF_p})$ is the regular class, therefore
any cuspidal module must appear in the head of the usual Gelfand--Graev
representation $\Gamma_u$ for some (any) regular unipotent element $u$.
Since $\gamma_u$ has only the Steinberg character as a unipotent constituent,
it follows that $\Gamma_u$ has only one unipotent projective indecomposable
summand, therefore at most one unipotent cuspidal module in its head.

For general linear groups again, the progenerator given in Corollary
\ref{cor:progenerator} involves only parabolic induction of usual Gelfand--Graev
representations. This progenerator was already studied in \cite{BR06} and
\cite{Bo11}. Our construction is a natural generalisation to arbitrary finite
reductive groups.
\end{rem}

\section{$\ell$-reduction of cuspidal unipotent characters}   \label{sec:irrcusp}

Recall that $\bG$ is a connected reductive group defined over $\FF_q$, with
corresponding Frobenius endomorphism $F$. Throughout this section we will
assume that $p$, the characteristic of $\FF_q$, is good for $\bG$. In that
case we can use the result of the previous section to show that under some
mild assumptions on $\ell$, cuspidal unipotent characters remain irreducible
after $\ell$-reduction.

\subsection{Multiplicities in generalised Gelfand--Graev characters}
Recall from \S\ref{sec:progen} that given a unipotent character $\rho$,
any generalised Gelfand--Graev character $\gamma_u$ with
$u \notin C_{\rho^*}$ and $\dim (u) \geq \dim C_{\rho^*}$ satisfies
$\langle \gamma_u ; \rho \rangle = 0$. We combine here results from
\cite{GM96,GeHe08} to compute $\langle \gamma_u ; \rho \rangle$ when $\rho$
is cuspidal and $u \in C_{\rho^*}$.

Given an $F$-stable unipotent class $C$ of $\bG$, recall that $\Gamma_C$ is the
sum of all the $\Gamma_u$'s where $u$ runs over a set of representatives of
$G$-orbits in $C^F$. We will denote by $\gamma_C$ the character of
$K\Gamma_C$.

\begin{prop}   \label{prop:mult}
 Assume that $\bG$ is simple of adjoint type. Let $\rho$ be a cuspidal
 unipotent character with unipotent support $C_\rho$. Then
 $C_\rho = C_{\rho^*}$ and $\langle \gamma_{C_\rho} ; \rho \rangle = 1$.
\end{prop}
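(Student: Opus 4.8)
The plan is to reduce the statement to the case of a cuspidal unipotent class, where the combinatorics of GGGRs becomes transparent, and then to combine the multiplicity-one information coming from Geck--H\'ezard's construction with the cuspidality constraint on the unipotent support. First I would recall that by the result of Lusztig--Taylor quoted in \S\ref{sec:progen}, the unipotent support $C_{\rho^*}$ of the Alvis--Curtis dual governs which GGGRs can see $\rho$: there is $u\in C_{\rho^*}^F$ with $\langle\gamma_u;\rho\rangle\ne0$, and $\langle\gamma_u;\rho\rangle=0$ for all $u$ in classes of strictly larger dimension. Combined with the fact (from \S\ref{sec:duality}) that $C_\rho$ and $C_{\rho^*}$ are both special, and that Alvis--Curtis duality preserves families while the unipotent support is the special class attached to the family, one should get $C_\rho=C_{\rho^*}$: both are the special class $C_\cF$ attached to the family $\cF$ containing $\rho$ (which is stable under $*$, as $*$ permutes the characters within a family). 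This disposes of the first assertion.

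For the multiplicity statement, the key point is that $\rho$ being cuspidal forces $C_\rho$ to be a \emph{cuspidal} unipotent class in the sense of \cite{GM96}: if it were non-cuspidal, it would meet a proper 1-split Levi $\bL$ with the component-group isomorphism, and then $\gamma_{C_\rho}^G$ would be expressible via $R_L^G$ of GGGRs from $\bL$ (by \cite[Cor.~2.7]{GM96}), against which a cuspidal $\rho$ pairs to zero by adjunction of Harish-Chandra induction/restriction --- contradicting $\langle\gamma_u;\rho\rangle\ne0$ for some $u\in C_\rho^F$. So $C_\rho=C_{\rho^*}$ is cuspidal, and in particular it supports only one $G$-orbit worth of relevant data in the sense made precise in \cite{GM96}. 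I would then invoke the Geck--H\'ezard package as in the proof of Proposition~\ref{prop:unipfunctions}: for each $G$-orbit representative $u_r$ in $C_\rho^F$ there is an irreducible character $\rho_{r}$ with unipotent support $C_\rho$ and $\langle D_G(\rho_r);\gamma_{u_s}\rangle=\pm\delta_{r,s}$; since $\rho$ has unipotent support $C_\rho=C_{\rho^*}$, by the triangularity $\rho^*$ is (up to sign) one of these $\rho_r$, say $\rho^*=\pm\rho_{r_0}$, and then $\langle\gamma_{u_s};\rho\rangle=\pm\langle D_G(\rho_{r_0});\gamma_{u_s}\rangle=\pm\delta_{r_0,s}$. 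Summing over the orbit representatives gives $\langle\gamma_{C_\rho};\rho\rangle=\sum_s\langle\gamma_{u_s};\rho\rangle=\pm1$, and positivity (a GGG character of $\cO G$ is a genuine projective character, so all multiplicities with ordinary irreducibles are $\ge0$) forces the value $+1$.

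The step I expect to be the main obstacle is the precise bookkeeping around Alvis--Curtis duality: showing cleanly that $\rho^*$ occurs among the Geck--H\'ezard characters $\{\rho_{r}\}$ attached to the class $C_\rho=C_{\rho^*}$, with the correct sign so that the pairing with $\gamma_{C_\rho}$ comes out to exactly $+1$ rather than merely $\pm1$ --- one must rule out cancellation between different orbits and identify which family/series the $\rho_r$ live in (they are built from a specific isolated Lusztig series, and one has to match this with the series of $\rho$). A secondary subtlety is verifying that cuspidality of the \emph{character} $\rho$ transfers to cuspidality of the \emph{class} $C_\rho$ in the combinatorial sense of \cite{GM96}; here I would lean on \cite[Cor.~2.7, Prop.~2.3]{GM96} exactly as in Proposition~\ref{prop:unipfunctions}, using that Harish-Chandra restriction of $\rho$ to any proper 1-split Levi vanishes. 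Since $\bG$ is assumed simple of adjoint type, there are only finitely many cuspidal unipotent classes to worry about and the relevant families are small (abelian, $\fS_3$, or the sporadic $\fS_4/\fS_5$ cases in $F_4$, $E_8$), so in the worst case the identification can be made by inspection using \cite{Ge97,DLM14,Tay14}.
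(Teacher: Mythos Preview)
Your argument for $C_\rho=C_{\rho^*}$ rests on the assertion that Alvis--Curtis duality ``permutes the characters within a family.'' This is false in general: the trivial character and the Steinberg character are Alvis--Curtis duals of one another but lie in distinct families (with distinct unipotent supports, the trivial class versus the regular class). The paper's argument is both simpler and correct: since $\rho$ is cuspidal, ${}^*R_L^G(\rho)=0$ for every proper $1$-split Levi $L$, so in the alternating sum defining $D_G$ only the term $L=G$ survives and $D_G(\rho)=\pm\rho$, i.e.\ $\rho^*=\rho$. This immediately gives $C_{\rho^*}=C_\rho$.

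For the multiplicity statement, the central step in your proposal --- that $\rho^*$ (equivalently $\rho$) is, up to sign, one of the Geck--H\'ezard characters $\rho_r$ attached to $C_\rho$ --- is not justified and is in general false as stated. The $\rho_{i,r}$ in \cite{GeHe08} are chosen from Lusztig series of suitable \emph{isolated} elements so that the family group matches $A_G(u)$; they are typically not unipotent characters at all, so there is no reason for the unipotent character $\rho$ to appear among them. The triangularity of the pairing only tells you that $\{D_G(\rho_{i,r})\}$ spans the space of unipotently supported class functions; it does not force any particular character with unipotent support $C_\rho$ to equal some $\rho_{r}$. You identify this yourself as the ``main obstacle,'' and your fallback (``inspection using \cite{Ge97,DLM14,Tay14}'') is essentially where the actual work lies. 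The paper proceeds differently and more directly: it quotes \cite[Thm.~3.3]{GM96} for the cuspidality of $C_\rho$ and the identification $A_G(u)\cong\Gamma_\cF$, then computes $\langle\gamma_u;\rho\rangle$ via \cite[Prop.~4.3]{GeHe08} when $A_G(u)$ is abelian (classical types and $E_7$), and via the explicit projection formula \cite[Thm.~6.5(ii)]{DLM14} when $A_G(u)\in\{\fS_3,\fS_4,\fS_5\}$ (types $G_2$, $E_6$, $\tw2E_6$, $F_4$, $E_8$). In the latter case the projection of $\gamma_{u(a)}$ onto $\cF$ is $\sum_{\chi\in\Irr(C_A(a))}\chi(1)\,\rho_{(a,\chi)}^*$, and one checks from Lusztig's tables that a cuspidal unipotent character is always parametrised by a pair $(a,\chi)$ with $\chi(1)=1$; summing over $a$ gives $\langle\gamma_{C_\rho};\rho\rangle=1$ on the nose, with no sign ambiguity to resolve.
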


\begin{proof}
Let $\cF$ be the family of unipotent characters containing $\rho$. Since
$\rho$ is cuspidal, we have $\rho = \rho^*$, and hence $C_{\rho^*} = C_\rho$.
By \cite[Thm.~3.3]{GM96}
$C_\rho$ is a cuspidal class, and for $u \in C_\rho^F$ the finite group
$A_G(u)$ is isomorphic to the small finite group associated to $\cF$ as in
\cite[\S4]{LuB}. In particular, the condition ($*$) in \cite[Prop.~2.3]{GeHe08}
holds for $C_\rho$ (and for $C_{\rho^*}$).
\smallskip

If $\bG$ is of classical type or of type $E_7$, then the claim follows
from \cite[Prop.~4.3]{GeHe08} since $A_G(u)$ is abelian in that case.
\smallskip

If $\bG$ is of exceptional type different from $E_7$, then $A_G(u)$ equals
$\fS_3$ (for types $G_2$ and $E_6$), $\fS_4$ (for type $F_4$) or $\fS_5$
(for type $E_8$). In that case, by \cite[Lemma~1.3.1]{Kaw86}, there is at
most one local system on $C_\rho$ that is not in the image of the Springer
correspondence and the multiplicity of $\rho$ in $\gamma_u$ was
first computed by Kawanaka \cite{Kaw86}. An explicit formula can also be
found in \cite[\S 6]{DLM14}. The assumption~(6.1) in loc.~cit.~is satisfied
and the projection of $\gamma_u$ to $\cF$ can be explicitly computed from
Lusztig's parametrisation. Namely, let now $u \in C_\rho^F$ be such that $F$
acts trivially on $A = A_G(u)$. Then the conjugacy classes of $A$ are in
bijection with the $G$-orbits in $C_\rho^F$. Given $a \in A$, we fix
a representative $u(a)$ of the corresponding orbit.
Then the unipotent characters in $\cF$ are parametrized by $A$-conjugacy
classes of pairs $(a,\chi)$ where $\chi \in \Irr(C_A(a))$ and according to
\cite[Thm.~6.5(ii)]{DLM14} the projection of $\gamma_{u(a)}$ to $\cF$ is given
by
$$\sum_{\chi \in \Irr(C_A(a))}\chi(1)\,\rho_{(a,\chi)}^*.$$
Now the claim follows from the fact that a cuspidal character always
corresponds to a pair $(a,\chi)$ with $\chi(1) =1$. This can be checked
case-by-case using for example \cite[Appendix]{LuB} (thanks to
\cite[Thm.~1.15, \S1.16]{Lu80}, the parametrisation for $\tw2E_6$ can be
deduced from the one for $E_6$ by Ennola duality).
\end{proof}

\subsection{$\ell$-reduction}
We can now prove the main application of the construction of our progenerator,
viz.~Theorem~1, which we restate in a slightly more general form.

\begin{thm}   \label{thm:irrcusp}
 Assume that $p$ is good for $\bG$ and that $\ell\ne p$. If either $\bG$ is
 simple of adjoint type, or $\ell$ is good for $\bG$ and does not divide
 $|Z(\bG)^F/Z^\circ(\bG)^F|$, then any cuspidal unipotent character of $\bG^F$
 remains irreducible under reduction modulo~$\ell$.
\end{thm}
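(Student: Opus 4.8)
The plan is to derive Theorem~\ref{thm:irrcusp} from the progenerator $P$ built in Corollary~\ref{cor:progenerator} together with the multiplicity-one statement of Proposition~\ref{prop:mult}. Let $\rho$ be a cuspidal unipotent character of $G=\bG^F$; we must show its $\ell$-reduction $\overline\rho$ is irreducible, equivalently that the $KG$-module affording $\rho$ is the reduction of an $\cO G$-lattice whose reduction has simple head (hence is simple, by a standard argument using the symmetry of the decomposition matrix for a self-dual situation, or more directly by showing the decomposition number vectors line up). The cleanest route: show that $\rho$ occurs with multiplicity exactly $1$ in the character of $P\otimes K$, and that every $kG$-composition factor of the reduction $\overline\rho$ is cuspidal; then since $P$ is a progenerator, $\langle[P\otimes K:\rho]\rangle$ counts (with the Cartan-type pairing) the total multiplicity of the projective summands of $P$ mapping onto composition factors of $\overline\rho$, and multiplicity one forces $\overline\rho$ to be simple.

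First I would reduce to the case $\bG$ simple of adjoint type. A cuspidal unipotent character of $\bG^F$ is, by the remarks in \S\ref{sec:progen} on cuspidality and the standard theory (unipotent characters are trivial on the center, factor through $\bG\to\bG/Z^\circ(\bG)$, and further relate to $\bG_\ad$), pulled back from a cuspidal unipotent character of the adjoint group, and irreducibility of the $\ell$-reduction is unaffected by central tori provided $\ell\nmid|Z(\bG)^F/Z^\circ(\bG)^F|$ — this is exactly the hypothesis. One also needs that $\ell$ good and $\ell\nmid p$ guarantees the $\ell$-reduction of the Steinberg and related characters behaves well; I would cite the standard reduction arguments (Geck--Hiss, Bonnafé--Rouquier) to pass from $\bG$ to a direct product of simple adjoint groups, and then to a single simple adjoint group since cuspidality of a unipotent character on a product means cuspidality on each factor. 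So assume $\bG$ simple adjoint.

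Now by Proposition~\ref{prop:mult}, $C_\rho=C_{\rho^*}$, this class is cuspidal for $\bG=\bG_\ad$, and $\langle\gamma_{C_\rho};\rho\rangle=1$; hence $\rho$ occurs with multiplicity one in the character of $k\Gamma_{C_\rho}$ lifted to $K$, and by the vanishing result recalled in \S3.1 (Lusztig--Taylor), $\rho$ does not occur in $\RLG(\gamma_u^L)$ for any proper $1$-split Levi $\bL$ (as $\rho$ is cuspidal) nor in $\gamma_u$ for $u$ in a unipotent class of dimension $\ge\dim C_\rho$ other than $C_\rho$ itself — and classes strictly larger in dimension than $C_{\rho^*}$ contribute zero while classes not above $C_{\rho^*}$ in the closure order can be discarded. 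Therefore $\rho$ occurs with total multiplicity exactly $1$ in the character of $P\otimes K$, coming from a single indecomposable projective summand $Q$ of $P$ (a summand of $\Gamma_{C_\rho}$). On the other hand, Alvis--Curtis duality commutes with $\ell$-reduction and sends cuspidal to cuspidal, and cuspidality is preserved by $\ell$-reduction in the sense that every composition factor of the reduction of a cuspidal $KG$-module is a cuspidal $kG$-module (Harish-Chandra restriction is exact and commutes with reduction); so every composition factor $S$ of $\overline\rho$ is cuspidal, and by Theorem~\ref{thm:cusp} each such $S$ is in the head of some $k\Gamma_u$ with $u$ in a cuspidal class of $\bG_\ad$, i.e.\ $S$ is a quotient of a projective summand of some $\Gamma_C$ appearing in $P$.

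Assembling: let $Q_1,\dots,Q_m$ be the distinct indecomposable projective summands of $P$ that surject onto composition factors of $\overline\rho$; then $\langle\chi_{Q_j\otimes K};\rho\rangle\ge1$ for each $j$ (since $Q_j$ covers a composition factor of $\overline\rho$, its reduction shares a constituent with $\overline\rho$, and by self-duality/BGG-reciprocity-type arguments — $\dim\Hom_{kG}(Q_j,\overline\rho)$ equals the multiplicity of the head of $Q_j$ in $\overline\rho$, which is $\le[Q_j\otimes K:\rho]$ — actually one gets $[Q_j\otimes K:\rho]\ge\dim\Hom(Q_j,\overline\rho)\ge1$). Summing, $1=\langle\chi_{P\otimes K};\rho\rangle\ge\sum_{j=1}^m[Q_j\otimes K:\rho]\ge m$, so $m=1$ and $[Q_1\otimes K:\rho]=1$; moreover $\dim\Hom_{kG}(Q_1,\overline\rho)=1$, so $\overline\rho$ has simple head $S_1=\mathrm{hd}(Q_1)$. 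Applying the same reasoning to $\rho^*=\rho$ (or to the contragredient, using that $\overline{\rho}$ is self-dual up to the automorphism and $\gamma$'s behave well under duality) gives that $\overline\rho$ also has simple socle, and the socle equals the head; combined with $[Q_1\otimes K:\rho]=1$ this forces $\overline\rho=S_1$, i.e.\ $\overline\rho$ is irreducible.

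\medskip

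The main obstacle I anticipate is the bookkeeping in the last step: turning "$\rho$ appears once in $P\otimes K$" into "$\overline\rho$ is simple" rigorously requires a clean reciprocity statement relating $\dim\Hom_{kG}(Q,\overline\rho)$, the decomposition number $[Q\otimes K:\rho]$, and the composition multiplicities of $\mathrm{hd}(Q)$ in $\overline\rho$ — essentially a Brauer-reciprocity / unitriangularity argument — and one must ensure that distinct composition factors of $\overline\rho$ really are covered by distinct projective summands counted in $P$ (no cancellation or overlap). A secondary subtlety is the reduction to simple adjoint type: one has to be careful that Harish-Chandra series, cuspidality, and $\ell$-reduction all interact compatibly with regular embeddings and central quotients, which is why the hypothesis $\ell\nmid|Z(\bG)^F/Z^\circ(\bG)^F|$ (and $\ell$ good) is needed — the adjoint case itself needs no such hypothesis on $\ell$ beyond $\ell\ne p$, exactly as the theorem states.
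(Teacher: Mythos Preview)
Your overall strategy matches the paper's: show that a cuspidal unipotent character $\rho$ appears with total multiplicity one in the character of the progenerator $P$ of Corollary~\ref{cor:progenerator}, and deduce irreducibility of its $\ell$-reduction. Two points deserve attention.

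\smallskip
\textbf{A genuine gap.} Your argument that $\rho$ does not occur in $\gamma_C$ for cuspidal classes $C\neq C_\rho$ is incomplete. You invoke the Lusztig--Taylor vanishing for classes of dimension strictly greater than $\dim C_{\rho^*}$, and then assert that ``classes not above $C_{\rho^*}$ in the closure order can be discarded''. The second clause is unjustified: for an arbitrary (non-cuspidal) class $C$ with $\dim C\le\dim C_\rho$ there is no reason for $\langle\gamma_u;\rho\rangle$ to vanish --- for instance $\gamma_1$ is the regular character. What is actually needed, and what the paper invokes, is \cite[Thm.~3.3]{GM96}: for $\rho$ cuspidal unipotent, \emph{every} cuspidal class $C$ satisfies $C_\rho\subset\overline{C}$. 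Hence any cuspidal $C\neq C_\rho$ has $\dim C>\dim C_\rho=\dim C_{\rho^*}$, and only then does the vanishing apply. There are simply no cuspidal classes ``not above $C_{\rho^*}$'' to discard.

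\smallskip
\textbf{Unnecessary complications in the last step.} Once you know $\langle\chi_{P\otimes K};\rho\rangle=1$, irreducibility of $\overline\rho$ is immediate and needs none of the cuspidality-of-composition-factors, Theorem~\ref{thm:cusp}, or head/socle/duality apparatus you introduce. Since $P$ is a progenerator, every PIM $P_S$ occurs as a summand with some multiplicity $n_S\ge1$, and Brauer reciprocity gives $[P_S\otimes K:\rho]=d_{\rho,S}=[\overline\rho:S]$. Thus
\[
1=\langle\chi_{P\otimes K};\rho\rangle=\sum_S n_S\,d_{\rho,S}\ \ge\ \sum_S d_{\rho,S},
\]
forcing exactly one $d_{\rho,S}$ to be nonzero, equal to $1$; that is, $\overline\rho$ is simple. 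In fact your own computation already yields $m=1$ and $[Q_1\otimes K:\rho]=1$, which is precisely this conclusion; the subsequent appeal to simple socle and self-duality is superfluous.

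\smallskip
For the reduction to simple adjoint type, the paper's route is via a regular embedding together with the fact (from \cite{Ge93}, and this is where the hypotheses $\ell$ good and $\ell\nmid|Z(\bG)^F/Z^\circ(\bG)^F|$ enter) that the unipotent characters form a basic set, so the unipotent parts of the decomposition matrices of $G$ and $\widetilde G$ agree; one then passes to $G/Z(\bG)^F$ and splits into simple adjoint factors. Your sketch gestures at the right hypotheses but the citations are vague; the basic-set argument is the clean way to make this precise.
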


\begin{proof}
First assume that $\bG$ is simple of adjoint type.
Let $\rho$ be a cuspidal unipotent character of $G$, and $C_\rho$ be its
unipotent support. It is a special and self-dual cuspidal unipotent class.
Since $\rho$ is cuspidal, it does not occur in any $R_L^G(\gamma_u^L)$ for
proper $1$-split Levi subgroups $\bL$ of $\bG$. In addition, if $C$ is a
cuspidal class different from $C_\rho$ then by \cite[Thm.~3.3]{GM96} we have
$C_{\rho^*} = C_\rho \subset \overline{C}$ which forces
$\langle \gamma_u ; \rho \rangle = 0$ for every $u \in C^F$. Finally, it
follows from Proposition~\ref{prop:mult} that
$\langle\gamma_{C_\rho};\rho\rangle=1$. Consequently
$$ \big\langle\sum_{(L,C)\in\scrL} \RLG(\gamma_C^L) ; \rho \big\rangle = 1$$
which by Corollary~\ref{cor:progenerator} proves that $\rho$ appears in the
character of exactly one projective indecomposable module. In other words,
the $\ell$-reduction of $\rho$ is irreducible.
\par
In the general case, let $\bG \hookrightarrow \widetilde\bG$ be a regular
embedding. Then the restrictions of unipotent characters of $\widetilde G$
to $G$ remain irreducible (see \cite[Prop.~17.4]{CE}). Furthermore, under the
assumptions on $\ell$, the unipotent characters form a basic set of the union
of unipotent blocks \cite{Ge93}. Therefore the unipotent parts of the
decomposition matrices of $\widetilde G$ and $G$ are equal, so that we can
assume without loss of generality that the centre of $\bG$ is connected.
Now, $Z(\bG)^F= Z(G)$ and unipotent characters are trivial on the centre.
Consequently the unipotent parts of the decomposition matrices of $G$ and of
$G/Z(\bG)^F$ are equal,
and we can assume that $\bG$ is semisimple of adjoint type. In that case
$\bG$ is a direct product $\bG = \bG_1^{n_1}\times\cdots\times\bG_r^{n_r}$
where each $\bG_i$ is simple of adjoint type, with $F$ cyclically
permuting the copies of $\bG_i$ in $\bG_i^{n_i}$. Now the projection onto the
first component $\bG_i^{n_i} \rightarrow \bG_i$ induces a group isomorphism
$(\bG_i^{n_i})^F \simeq (\bG_i)^{F^{n_i}}$ mapping unipotent characters to
unipotent characters. Therefore we are reduced to the case that $\bG$ is
simple of adjoint type, which was treated above.
\end{proof}

By a result of Hiss \cite[Prop.~3.3]{Hi96}, this has the following consequence
which might be of interest for applications to representations of $p$-adic
groups:

\begin{cor}
 Assume that $p$ and $\ell$ are good for $\bG$ and that $\ell \nmid
 p|Z(\bG)^F/Z^\circ(\bG)^F|$. Then any unipotent supercuspidal simple
 $kG$-module is liftable to an $\cO G$-lattice.
\end{cor}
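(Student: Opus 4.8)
The plan is to obtain this as a formal consequence of Theorem~\ref{thm:irrcusp} (that is, Theorem~1) combined with Hiss's criterion \cite[Prop.~3.3]{Hi96}. Under hypotheses on $p$ and $\ell$ matching those of the corollary, that criterion asserts that a unipotent supercuspidal simple $kG$-module is liftable to an $\cO G$-lattice as soon as the reduction modulo~$\ell$ of every cuspidal unipotent character of $\bG^F$ is irreducible. The underlying mechanism is that such a module lies in a unipotent block, that the cuspidal ordinary characters of that block which reduce onto it are themselves unipotent and cuspidal, and that an inspection of the decomposition matrix of the block then forces the module to lift.

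First I would record that the hypotheses of the corollary --- namely that $p$ and $\ell$ are good for $\bG$ and that $\ell\nmid p\,|Z(\bG)^F/Z^\circ(\bG)^F|$ --- are exactly the hypotheses under which Theorem~\ref{thm:irrcusp} applies, so that Theorem~1 yields: every cuspidal unipotent character of $\bG^F$ remains irreducible upon $\ell$-reduction. Next I would feed this conclusion into \cite[Prop.~3.3]{Hi96} to deduce that every unipotent supercuspidal simple $kG$-module lifts to an $\cO G$-lattice.

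The only genuinely non-formal point, and hence the step I expect to need care, is to verify the dictionary between the present set-up and that of \cite{Hi96}: that the notion of unipotent supercuspidal module used there coincides with the one in the statement, that the blocks involved are indeed unipotent blocks, and that the good-characteristic and central-quotient conditions required by Hiss are implied by ours. Once this bookkeeping is in place the corollary is immediate.
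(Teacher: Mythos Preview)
Your proposal is correct and matches the paper's own argument: the corollary is stated immediately after Theorem~\ref{thm:irrcusp} with the single line ``By a result of Hiss \cite[Prop.~3.3]{Hi96}, this has the following consequence,'' and no further proof is given. Your write-up simply makes explicit the two-step deduction (Theorem~1 plus Hiss's criterion) that the paper leaves implicit.
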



\end{document}